\begin{document}
\def\i{\,\lrcorner\,}
\def\a{\alpha}
\def\b{\beta}
\def\c{\gamma}
\def\G{\Gamma}
\def\ni{\noindent}
\def\vs{\vskip .6cm}
\def\ss{\smallskip}
\def\la{\langle}
\def\ra{\rangle}
\def\.{\cdot}
\def\O{\Omega}
\def\n{\nabla}
\def\l{\lambda}
\def\t{\tilde}
\def\beq{\begin{equation}}
\def\eeq{\end{equation}}
\def\bea{\begin{eqnarray*}}
\def\eea{\end{eqnarray*}}
\def\ba{\begin{array}}
\def\ea{\end{array}}
\def\x{\times}
\def\f{\varphi}
\def\ms{\medskip}
\def\o{\omega}
\def\ld{\ldots}
\def\e{\varepsilon}
\def\L{\Lambda}
\def\k{\kappa}
\def\r{\end{proof}}
\def\res{\arrowvert}
\def\I{{\mathcal I}}
\def\pa{\partial}
\def\dt{\pa_t}
\def\ds{\pa_s}
\def\ci{{\mathcal C}^\infty}
\def\CP{\CM{\rm P}}

\def \RM{\mathbb{R}}
\def \QM{\mathbb{Q}}
\def \N{\mathbb{N}}
\def \ZM{\mathbb{Z}}
\def \CM{\mathbb{C}}
\def \TM{\mathbb{T}}
\def \HM{\mathbb{H}}
\def\End{{\rm End}}

%%%%%%%%%%%%%%%%%%%%%%%%%%%%%%%%%%%%%%%%%%%%%%%%%%%%%%%%%%%%%%%%%%%%%%%%%%%

\def\d{{\delta}}

%%%%%%%%%%%%%%%%%%%%%%%%%%%%%%%%%%%%%%%%%%%%%%%%%%%%%%%%%%%%%%%%%%%%%%%%%%%%%%%

\def\es{\,\lrcorner\,}
\def\f{\varphi}
\def\Ric{\mathrm{Ric}}
\def\grad{\mathrm{grad}}
\def\id{\mathrm{id}}
\def\Lie{{\mathcal L}}
\def\U{\rm{U}}
\def\SU{\rm{SU}}
\def\OO{\rm{O}}
\def\tr{\mathrm{tr}}
\def\vol{\mathrm{vol}}
\def\hol{\mathfrak{hol}}
\def\so{\mathfrak{so}}
\def\R{\mathbb{R}}
\def\SS{{\mathfrak S}}
\def\Ker{\mathrm{Ker}}
\def\bp{\begin{proof}}
\def\LL{\mathcal {L}}
\def\ra{\rightarrow}
\def\lra{\longrightarrow}
\def\endproof{\hfill$\square$}

%%%%%%%%%%%%%%%%%%%%%%%%%%%%%%%%%%%%%%%%%%%%%%%%%%%%%%%%%%%%%%%%%%%%%%% 

\newtheorem{ede}{Definition}[section]

\newtheorem{epr}[ede]{Proposition}

\newtheorem{ath}[ede]{Theorem}

\newtheorem{elem}[ede]{Lemma}

\newtheorem{ere}[ede]{Remark}

\newtheorem{ecor}[ede]{Corollary}

\newtheorem{eex}[ede]{Example}
 
%%%%%%%%%%%%%%%%%%%%%%%%%%%%%%%%%%%%%%%%%%%%%%%%%%%%%%%%%%%%%%%%%%%%%%%%%%% 
                             
\title{Essential points of conformal vector fields}
\author{Florin Belgun}
\address{Institut f\"ur Mathematik, Humboldt-Universit\"at zu Berlin,
  10099 Berlin, Germany}
\email{belgun@math.hu-berlin.de}
\author{Andrei Moroianu}
\address{Centre de Math{\'e}matiques, Ecole Polytechnique, 91128
  Palaiseau Cedex, France} 
\email{am@math.polytechnique.fr}
\author{Liviu Ornea}
\address{Univ. of Bucharest, Faculty of Mathematics,
14 Academiei str., 70109 Bucharest, Romania, and 
Institute of Mathematics ``Simion Stoilow" of the Romanian Academy,
21, Calea Grivitei str., 
010702-Bucharest, Romania.}
\email{lornea@gta.math.unibuc.ro, Liviu.Ornea@imar.ro}
\thanks{Part of this work was accomplished in the framework of the
  Associated European Laboratory ``MathMode". L.O. is partially supported by
CNCSIS PNII  grant code 8, 525/2009. F.B. is partially supported by
the SFB 647 of the DFG}
\begin{abstract}
An essential point of a conformal vector field $\xi$ on a conformal
manifold $(M,c)$
is a point around which the local flow of $\xi$ preserves no metric in
the conformal class $c$. It is well-known
that a conformal vector field vanishes at each essential point. In
this note we show that essential points are
isolated. This is a generalization to higher dimensions of the
fact that the zeros of a holomorphic function are isolated. 
As an application, we show that every
connected component of the zero set of a conformal vector field is
totally umbilical.  

\vs

\noindent
2000 {\it Mathematics Subject Classification}: Primary 53C15, 53C25.

\medskip
\noindent{\it Keywords:} conformal vector field, essential point,
totally umbilical manifolds. 
\end{abstract}

\maketitle

\section{Introduction}

It is a classical result by D. Alekseevskii \cite{a} that the group of
conformal automorphisms of 
a Riemannian manifold either fixes a conformally equivalent metric or
the manifold is conformally flat. A gap in his proof, found by
R.J. Zimmer and K.R. Gutschera in 1992, was filled by J. Ferrand \cite{f}.

From an infinitesimal point
of view, if a conformal manifold admits a complete 
and {\em essential} conformal vector field (whose global
flow acts by conformal transformations, but not by isometries with
respect to any compatible metric), then it is conformally flat,
\cite{kr}. More recently, Ch. Frances proved a local version of this result
\cite{fr}, see also \cite{fm} and \cite {l} for the pseudo-Riemannian setting. 

For a given conformal vector field $\xi$ (whose flow is 
not globally defined in general), there might exist local metrics in
the conformal class preserved by the local flow of
$\xi$. The union of the definition domains of these $\xi$-invariant
local metrics is the open set of {\em non-essential} points of
the conformal vector field $\xi$. This motivates the following:

\begin{ede} Let $(M,c)$ be a conformal manifold and let $\xi$ be a conformal
  vector field on $M$. A point $x\in M$ is called {\em essential} for
  $\xi$ if there is no local metric in $c$ preserved by the 
  local flow of $\xi$ around $x$.
\end{ede}

If $\xi$ does not vanish at some point $p$, it is easy to see that $p$
is not essential. Indeed, $\xi$ is rectifiable in a neighborhood $U$ of
$p$, so there exists a system of coordinates $(x_1,\ldots,x_n)$ near
$p$ such that $x_i(p)=0$ for all $i$ and $\xi=\partial/\partial x_1$. Take any
metric in $c$, restrict it to the hypersurface
$U\cap \{x_1=0\}$ and extend it to $U$ as to be constant with respect
to $x_1$. Then $\xi$ preserves the new metric, which moreover belongs
to the conformal class $c$ because $\xi$ is conformal. 

In this note, we prove 

\begin{ath}\label{ess} The essential set of a conformal vector field $\xi$ on a
  Riemannian manifold $M$ of dimension $n\ge 2$ consists of isolated
  zeros of $\xi$. 
\end{ath}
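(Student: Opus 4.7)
Since the excerpt already proves that every essential point is a zero of $\xi$, the task reduces to showing that essential zeros are isolated among zeros of $\xi$. The plan is to argue by contrapositive: if $p$ is an accumulation point of zeros of $\xi$, then $p$ is not essential.

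The basic tool will be the $2$-jet of $\xi$ at a zero $p$. Writing $\mathcal{L}_\xi g = 2\lambda g$ for some $g\in c$, the conformal Killing equation at $p$ yields $\nabla\xi|_p = A+\lambda(p)\,\mathrm{id}$ with $A\in\mathfrak{so}(T_pM,g_p)$; differentiating once more and using a standard cyclic permutation argument in geodesic normal coordinates gives
\[
\partial_j\partial_l\xi^i(p) \;=\; b_l\delta_{ij}+b_j\delta_{il}-b_i\delta_{jl}, \qquad b:=d\lambda|_p,
\]
so the quadratic part of $\xi$ at $p$ is $\xi_2(v)=(b\cdot v)\,v-\tfrac12|v|^2\,b$. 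First I would dispose of the case $\lambda(p)\neq 0$: all eigenvalues of $\nabla\xi|_p$ are $\lambda(p)\pm i\mu_k$ and hence have nonzero real part, so $\nabla\xi|_p$ is invertible and the inverse function theorem makes $p$ an isolated zero, done. Hence I may assume $\lambda(p)=0$, so that $\nabla\xi|_p=A$ is skew.

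Next I fix a sequence of distinct zeros $p_k=p+v_k\to p$ and let $w=\lim v_k/|v_k|$, passing to a subsequence so that $|w|=1$. The lowest-order balance of $\xi(p+v_k)=0$ forces $Aw=0$, and pushing the expansion one order further shows $\xi_2(w)\in\mathrm{Im}(A)=(\ker A)^\perp$. Projecting this onto $\ker A$ and then taking the inner product with $w$ yields $b\cdot w=0$ and $P_{\ker A}(b)=0$, i.e.\ $b\in\mathrm{Im}(A)$. In particular the linear equation $A\alpha_1=b$ is solvable, so the PDE $\xi(u)=-\lambda$ admits a formal solution $u=\alpha_1\cdot v+\alpha_2(v)+\cdots$ to first order; once such a smooth $u$ is produced in a neighborhood of $p$, the metric $e^{2u}g\in c$ satisfies $\mathcal{L}_\xi(e^{2u}g)=0$, contradicting the essentiality of $p$.

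The main obstacle is this last step: extending the formal solution to all orders and then to a genuine smooth local solution of $\xi(u)=-\lambda$ at the degenerate zero $p$. At each order the obstruction to extending $u$ is controlled by the higher jets of $\xi$, themselves determined by iterated differentiation of the conformal Killing equation, and must be shown to vanish using the extra constraints that accompany a positive-dimensional stratum of zeros through $p$ (in particular, $\lambda$ is forced to vanish along the zero set and its covariant derivatives at $p$ are restricted accordingly). It is the overdetermined-yet-consistent character of the conformal Killing equations, combined with this geometric input from the non-isolated zero hypothesis, that should make the construction succeed.
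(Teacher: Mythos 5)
Your reduction and your jet computations reproduce, in geodesic normal coordinates, exactly the content of the paper's Theorem~\ref{fi}: at a non-isolated zero $p$ one has $\varphi(p)=0$ (your $\lambda(p)=0$; the paper obtains this by a mean-value argument along geodesics joining $p$ to nearby zeros, while you observe that $\lambda(p)\neq 0$ makes $\nabla\xi|_p$ invertible and hence $p$ isolated --- both are correct), and $d\varphi_p\in\mathrm{Im}(\nabla\xi|_p)$ (your $b\in\mathrm{Im}(A)$, derived from the same second-order Taylor balance along the sequence of zeros, after projecting onto $\ker A$). Up to this point your argument is sound and runs parallel to the paper's proof.

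The genuine gap is the final step. Having the two algebraic conditions in hand, the paper does not construct the invariant metric itself: it invokes a criterion of Beig and Capocci (Theorem~\ref{cap}) asserting that these conditions are \emph{sufficient} for $\xi$ to be Killing with respect to some conformally rescaled metric near $p$. That sufficiency is a nontrivial theorem about the equation you write down, $\xi(u)=-\lambda$ near a degenerate zero of $\xi$, and your proposal explicitly leaves it open: solvability of the single linear equation $A\alpha_1=b$ does not by itself give formal solvability at every order, and even a complete formal power series solution would not yield a smooth local solution, because $\xi$ vanishes at $p$ and the equation is singular there (one cannot simply integrate $\lambda$ along flow lines through the fixed point). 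The phrases ``must be shown to vanish'' and ``should make the construction succeed'' mark precisely the missing content. Either cite the Beig--Capocci criterion, in which case your proof coincides with the paper's, or supply an actual proof of this linearization step; as written, your argument establishes only the necessary algebraic conditions at $p$, not the existence of the $\xi$-invariant metric, which is what essentiality is about.
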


Note that this is equivalent for $n=2$ to the fact that the 
zeros of a non-constant holomorphic function are isolated. 
The theorem above may thus be seen as an extension to higher dimensions of this
classical fact. 

For the proof, we focus on the zero set of $\xi$ and show that if a
zero point $x$ of $\xi$ is not isolated then an algebraic condition
(Theorem \ref{cap}) is satisfied by the derivative of $\xi$ at $x$, which
implies, via a result by Beig \cite{beig} and Capocci \cite{C},
that $\xi$ is Killing with respect to a local metric around $x$.

As an application, we show in Theorem \ref{umb} that every connected
component of the zero set of $\xi$ is a totally umbilical submanifold,
thus generalizing a well-known result of S. Kobayashi \cite{kob},
that states that the connected components of the 
set of zeros of a Killing vector field on a Riemannian manifold 
are totally geodesic submanifolds of even codimension. 

A proof of Theorem \ref{umb} was previously given by
D.E. Blair  \cite{bla} under the additional restriction (required by
his proof based on the Obata theorem) that the manifold is
compact. Instead, our proof is purely local, as is the proof of the
above mentioned Kobayashi's result. 

Very recently, results similar to Theorem \ref{ess} were obtained
independently by M. Lampe in his PhD Thesis \cite{l}.

\section{Proof of the main result}
Let $(M^n,c)$ ($n\ge 3$) be a conformal manifold. We choose a
background Riemannian 
metric $g\in c$ and make the usual identifications between 
vectors and 1-forms, 2-forms and skew-symmetric endomorphisms etc.,
induced by this metric.

A vector field $\xi$ is called {\em conformal} if the symmetric part of its covariant
derivative with respect to the Levi-Civita connection of $g$ 
is reduced to its trace:
\beq\label{1} \n_Y\xi=\frac12 Y\i d\xi+\f Y,\qquad \forall Y\in TM.
\eeq  
The function $\f$ is equal to $-\delta^g\xi/n$, but we will not need this in the sequel.

Let $Z_{\xi}$ and $E_\xi$ denote the zero set and the essential set of $\xi$ respectively.
We have seen that $E_\xi\subset Z_\xi$, so in order to prove Theorem \ref{ess},
it will be enough to show that every limit
point\footnote{A {\em limit point} of a set $S$ in a topological space $X$ is a point $x\in S$ such that every neighborhood of $x$ intersects $S\setminus \{x\}$. The set of limit points of $S$ is denoted by $S'$.} $x$ of $Z_{\xi}$ is not essential,
{\em i.e.} there exists a $\xi$-invariant metric defined locally around $x$.

To do that, we make use of the following criterion by Capocci \cite[Theorem 2.1]{C}, (cf. also
\cite{beig}):

\begin{ath} \cite{C}\label{cap} Let $x\in Z_\xi$ be a zero of the conformal
  field $\xi$ on a Riemannian manifold $(M,g)$ of dimension at least $3$ and let $\f$ be the function defined by \eqref{1}. Then $\xi$ is a homothetic vector field with
  respect to some conformal metric $\tilde g$ defined on a
  neighborhood of $x$ if and only if the gradient of 
$\f$ with respect to $g$ belongs to the image of $\n \xi$ at
$x$; moreover, $\xi$ is Killing with respect to $\tilde g$ if, in
addition, $\f(x)=0$. 
\end{ath}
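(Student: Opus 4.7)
The plan is to recast the theorem as a first-order PDE for the conformal factor, identify the precise linear obstruction at $x$, and then solve jet by jet. Writing $\tilde g = e^{2\sigma} g$ and using $\mathcal{L}_\xi g = 2\varphi g$, a direct computation yields
\[
\mathcal{L}_\xi \tilde g \;=\; 2\bigl(\varphi + \xi(\sigma)\bigr)\, \tilde g,
\]
so $\xi$ is homothetic with respect to $\tilde g$ on a neighbourhood of $x$ iff $\varphi + \xi(\sigma)$ is locally constant there. Since $\xi(x)=0$, this constant must equal $\varphi(x)$, and Killingness is the extra condition $\varphi(x)=0$. Thus the theorem reduces to the claim that the equation
\[
\xi(\sigma) \;=\; \varphi(x) - \varphi
\]
admits a smooth local solution $\sigma$ around $x$ precisely when $\mathrm{grad}\,\varphi|_x \in \mathrm{Im}(\nabla\xi|_x)$.

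For necessity, set $A := \nabla\xi|_x$ and apply any $Y \in T_xM$ to the displayed equation at $x$. Using $\xi(x)=0$ and $\nabla_Y \xi|_x = AY$, the left-hand side becomes $\langle AY,\mathrm{grad}\,\sigma|_x\rangle_g = \langle Y, A^T\mathrm{grad}\,\sigma|_x\rangle_g$, while the right-hand side is $-\langle Y,\mathrm{grad}\,\varphi|_x\rangle_g$. Hence any solution forces $A^T \mathrm{grad}\,\sigma|_x = -\mathrm{grad}\,\varphi|_x$, which is solvable in $\mathrm{grad}\,\sigma|_x$ iff $\mathrm{grad}\,\varphi|_x \in \mathrm{Im}(A^T)$. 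By \eqref{1} the symmetric part of $A$ equals $\varphi(x)\,\mathrm{id}$ while its skew part is $\Omega := \tfrac12 d\xi|_x$, so $A = \varphi(x)\,\mathrm{id} + \Omega$. If $\varphi(x) \neq 0$ the eigenvalues $\varphi(x) \pm i\mu_j$ of $A$ on $T_xM\otimes\mathbb{C}$ are all nonzero and $A$ is invertible; if $\varphi(x) = 0$ then $A = \Omega$ is skew and $A^T = -A$. In both cases $\mathrm{Im}(A) = \mathrm{Im}(A^T)$, yielding the stated criterion.

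For sufficiency one has to actually construct $\sigma$. Pick $v \in T_xM$ with $A^T v = -\mathrm{grad}\,\varphi|_x$ and a smooth $\sigma_0$ with $\sigma_0(x)=0$ and $\mathrm{grad}\,\sigma_0|_x = v$, so that the first-order jet at $x$ is correct; writing $\sigma = \sigma_0 + \tau$ reduces the problem to $\xi(\tau) = r$ with $r$ vanishing to second order at $x$. The formal Taylor series of $\tau$ is then built inductively: on homogeneous polynomials of degree $k \ge 2$ the operator $\tau_k \mapsto \xi(\tau_k)$ is governed, to leading order, by $Ay \cdot \nabla$, whose eigenvalues on the space of degree-$k$ polynomials are sums of $k$ eigenvalues of $A$. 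The conformal form of $A$ guarantees that this induced operator is either invertible (when $\varphi(x) \neq 0$, since all eigenvalues then have real part $k\varphi(x) \ne 0$) or skew-symmetric with cokernel orthogonal to the successive residuals (when $\varphi(x) = 0$). A smooth $\tau$ matching this formal series is produced by Borel's theorem, and the remaining flat error is integrated out along the flow of $\xi$ on the complement of $\{x\}$, where the flow is regular.

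The main obstacle is precisely this order-by-order propagation. For a generic vector field vanishing at $x$, Sternberg-type resonances between eigenvalues of the linearisation generically obstruct formal inversion, and it is the rigidity of the conformal linearisation $A = \varphi(x)\,\mathrm{id} + \Omega$ that rules them out; in particular, in the borderline case $\varphi(x) = 0$ the compatibility between the cokernel of the induced skew operator and the residuals at each order requires a genuine argument, and is the content of the detailed verifications in \cite{beig, C}.
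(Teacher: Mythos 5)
First, note that the paper does not prove this statement at all: Theorem \ref{cap} is quoted from Capocci \cite{C} (cf.\ also \cite{beig}) and used as a black box, so there is no in-paper proof to compare against. Your reduction of the statement to the singular transport equation $\xi(\sigma)=\f(x)-\f$ for the conformal factor, and your proof of the \emph{necessity} of the condition $\grad_g\f|_x\in\mathrm{Im}(\n\xi|_x)$ (differentiating the equation at $x$, using $\xi_x=0$, and observing that $A=\n\xi|_x=\f(x)\,\id+\O$ is normal so that $\mathrm{Im}(A)=\mathrm{Im}(A^T)$), are correct and cleanly done.

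The sufficiency direction, however, contains genuine gaps. The crux of the whole theorem is the resonant case $\f(x)=0$, where the induced operator $\tau_k\mapsto (Ay)\cdot\n\tau_k$ on degree-$k$ polynomials has a nontrivial cokernel at every order; you assert that the residuals are orthogonal to this cokernel but explicitly defer the verification to \cite{beig} and \cite{C}. That compatibility is exactly where the conformal Killing equation and its prolongation (the identities for $\n d\xi$ and $\n d\f$) must be brought to bear, and without it the inductive construction of the formal series does not close --- for a generic $\f$ and a generic field with skew linearisation the equation $\xi(\sigma)=\f(x)-\f$ is formally unsolvable. Second, the final step ``the remaining flat error is integrated out along the flow of $\xi$ on the complement of $\{x\}$, where the flow is regular'' fails as stated: the zero set of $\xi$ near $x$ need not reduce to $\{x\}$ (indeed, in the only place the paper invokes Theorem \ref{cap}, $x$ is a \emph{limit point} of $Z_\xi$, so $\xi$ vanishes on a sequence, and in fact on a positive-dimensional set, accumulating at $x$); and even on the set where $\xi\ne 0$, a field with purely imaginary linearisation has closed or recurrent orbits near $x$, on which the transport equation $\xi(\tau)=h$ carries its own obstruction (the integral of $h$ over a closed orbit must vanish), which you do not address. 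So the skeleton and the necessity half are sound, but the two hardest steps of sufficiency are respectively outsourced and incorrect as written.
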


Theorem \ref{ess} will follow directly from Theorem \ref{cap}, together with: 

\begin{ath}\label{fi} Let $x\in Z_{\xi}'$ be a limit point of the 
zero set $Z_\xi$ of a conformal vector field $\xi$. 
Then the function $\f$ defined in \eqref{1} vanishes at $x$ and its gradient 
with respect to $g$ belongs to the image of $\n \xi$ at $x$.
\end{ath}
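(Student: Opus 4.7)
The plan is to Taylor-expand $\xi$ in Riemannian normal coordinates centered at $x$: the hypothesis $\xi(x_k)=0$ for a sequence $x_k\to x$ in $Z_\xi$ imposes algebraic constraints on the first two Taylor coefficients of $\xi$ at $x$, and via the conformal Killing equation \eqref{1} these coefficients are controlled entirely by $\f(x)$ and $\grad(\f)|_x$, which is exactly what the theorem needs.

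First, fix a sequence $x_k\in Z_\xi\setminus\{x\}$ converging to $x$; let $y_k$ denote its normal coordinates, set $v_k:=y_k/|y_k|$, and pass to a subsequence so that $v_k\to v$ for some unit $v\in T_xM$. Writing $A:=\n\xi|_x$, divide the first-order identity $\xi(y_k)=Ay_k+O(|y_k|^2)=0$ by $|y_k|$ and let $k\to\infty$ to obtain $Av=0$. Equation \eqref{1} evaluated at $x$ decomposes $A=S+\f(x)\,\id$ with $S$ skew, so $0=\langle Av,v\rangle=\f(x)|v|^2$ forces $\f(x)=0$, and hence $A=S$ is skew-symmetric.

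The central technical step is to pin down the Hessian $B:=\n^2\xi|_x$ explicitly. Differentiating the identity $\n_i\xi_j+\n_j\xi_i=2\f\,g_{ij}$ once and evaluating at $x$ in normal coordinates (using $\xi(x)=0$ and $\Gamma(x)=0$, which together force covariant second derivatives of $\xi$ to agree with partial ones at $x$) yields
\[
\pa_k\pa_i\xi_j|_x+\pa_k\pa_j\xi_i|_x=2(\pa_k\f)(x)\,\delta_{ij}.
\]
A short cyclic-permutation identity in the indices $(i,j,k)$ then uniquely recovers the symmetric-in-$(i,j)$ tensor $\pa_i\pa_j\xi_k|_x$, giving the closed form
\[
B(Y,Z)=Y(\f)\,Z+Z(\f)\,Y-\langle Y,Z\rangle\,\grad(\f)|_x \qquad \text{at }x.
\]

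To conclude, expand $\xi(y_k)=0$ to second order as $Ay_k+\tfrac12 B(y_k,y_k)=O(|y_k|^3)$. Since $A$ is skew we have $\mathrm{Im}(A)=\Ker(A)^\perp$ and $\langle Ay_k,u\rangle=-\langle y_k,Au\rangle=0$ for every $u\in\Ker(A)$; pairing the above identity with such a $u$, dividing by $|y_k|^2$ and letting $k\to\infty$ produces $\langle B(v,v),u\rangle=0$ for all $u\in\Ker(A)$. Substituting the formula for $B$ rewrites this as
\[
2v(\f)\,\langle v,u\rangle-|v|^2\,u(\f)=0 \qquad \forall\,u\in\Ker(A).
\]
Setting $u=v$ yields $v(\f)(x)=0$; feeding this back in with arbitrary $u\in\Ker(A)$ gives $u(\f)(x)=0$. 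Hence $\grad(\f)|_x\perp\Ker(A)$, equivalently $\grad(\f)|_x\in\mathrm{Im}(A)=\mathrm{Im}(\n\xi|_x)$. The main obstacle is the explicit Hessian identity above; once it is in place, the theorem follows by elementary linear algebra on the first two Taylor coefficients of $\xi$ along the distinguished direction $v\in\Ker(A)$.
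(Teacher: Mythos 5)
Your proof is correct, and while the overall strategy is the same as the paper's (a second-order Taylor expansion of $\xi$ at $x$ along the directions pointing to the zeros $x_k$, with the second-order coefficient controlled by the conformal Killing equation), you execute the two decisive steps by a genuinely different and in fact cleaner route. First, you get $\f(x)=0$ algebraically: the first-order expansion gives $\n\xi|_x(v)=0$ for the limit direction $v$, and since $\n\xi|_x$ is skew plus $\f(x)\,\id$, pairing with $v$ kills the skew part and forces $\f(x)=0$; the paper instead applies the mean value theorem to $t\mapsto g(\xi,\dot c_k(t))$, whose derivative along the geodesic is $\f\circ c_k$. Second, and more substantively, you neutralize the potentially unbounded first-order term $\n\xi|_x(y_k)/|y_k|^2$ by pairing the expansion with an arbitrary $u\in\Ker(\n\xi|_x)$, against which that term vanishes identically by skew-symmetry; the limit gives $\langle B(v,v),u\rangle=0$, hence $\grad_g\f|_x\perp\Ker(\n\xi|_x)$, which equals $\mathrm{Im}(\n\xi|_x)^{\perp}$. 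The paper instead projects $\dot c_k(0)/2t_k$ onto the image of $d\xi_x$, shows these projections are bounded, and extracts a convergent subsequence to exhibit an explicit preimage $W$ with $d\xi_x(W)=d\f_x$. Your duality argument also absorbs the intermediate fact $d\f_x(v)=0$ as the special case $u=v$, which the paper derives from a separate scalar Taylor expansion. Finally, your closed formula for the Hessian, obtained by the cyclic-permutation identity on the differentiated equation $\n_i\xi_j+\n_j\xi_i=2\f g_{ij}$, is correct and consistent with \eqref{xis0}: the curvature terms appearing in Lemma~\ref{dxi} drop out at $x$ precisely because $\xi_x=0$, which is why no analogue of that lemma is needed in your version. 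What the paper's approach buys is an explicit solution $W$ of $d\xi_x(W)=d\f_x$; what yours buys is the elimination of the projection-and-compactness argument and of the auxiliary curvature computation.
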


\begin{proof} Let $x_k\ne x$ be a sequence of zeros of $\xi$ converging to $x$. In a geodesic chart around $x$, we connect $x$ with 
  $x_k$ by uniquely defined minimizing geodesics, denoted by
  $c_k:[0,t_k]\ra M$, $c_k(0)=x,\ c_k(t_k)=x_k,\ \|\dot{c_k}\|=1$.

The function $f_k:=g(\xi,\dot{c_k})$ admits the following
Taylor-Lagrange expansion:
\beq\label{elf}f_k(t_k)=f_k(0)+t_kf'_k(\tau_k),\ \mbox{for } \tau_k\in
(0,t_k).\eeq 
Since $f_k(t_k)=f_k(0)=0$ and $t_k\ne 0$, we have found a sequence of points
$c_k(\tau_k)$, converging to $x$, such that $f'_k(\tau_k)=0$.

On the other hand, equation (\ref{1}) implies that
$f'_k(t)=\f(c_k(t))$, therefore $\f$ vanishes on a sequence converging
to $x$, thus $\f(x)=0$.

For the second part of the theorem, we need to show that $d\f_x$ lies in the image of the
skew-symmetric endomorphism $d\xi_x$ of $T_xM$ for every $x\in Z_{\xi}'$.

For a geodesic $c:[0,T]\ra M$ with $c(0)=x$ and $\|\dot{c}\|=1$, we denote by
$\xi'(t)$ and $\xi''(t)$ the derivatives $\n_{\dot{c(t)}}\xi$,
respectively $\n_{\dot{c(t)}}\n_{\dot{c(t)}}\xi$. From \eqref{1} we have
\beq\label{xip}\xi'(t)=\frac 12
d\xi(\dot{c}(t))+\f(c(t))\dot{c}(t),\eeq
in particular $\xi'(0)=\frac 12 d\xi(\dot{c}(0))$.

\begin{elem}\label{dxi} For a conformal vector field $\xi$ on a
  Riemannian manifold $(M,g)$, such that $\mathcal{L}_\xi g=2\f g$, the
  following relation holds:
$$\n_X d\xi= 2R_{X,\xi}+2d\f\wedge X,\qquad \forall X\in TM.$$\end{elem}

\begin{proof} The notations are those from (\ref{1}), which we will use
in the following equivalent form
\beq\label{dxi2}g(\n_A\xi,B)+g(\n_B\xi,A)=2\f g(A,B),\qquad \forall A,B\in
TM.\eeq

Let $x\in M$ and let $X,Y,Z$ be vector fields parallel at $x$. Since
$d\xi(Y,Z)=g(\n_Y\xi,Z)-g(\n_Z\xi,Y)$, we have at $x$:
\bea(\n_X
d\xi)(Y,Z)&=&\n_X\left(d\xi(Y,Z)\right)=g(\n_X\n_Y\xi,Z)-g(\n_X\n_Z\xi,Y)\\
&=&g(R_{X,Y}\xi,Z)-g(R_{X,Z}\xi,Y)+g(\n_Y\n_X\xi,Z)-g(\n_Z\n_X\xi,Y).\eea
We use (\ref{dxi2}) to substitute the last two terms, and the Bianchi
identity for the first two and get
\begin{align*}(\n_X d\xi)(Y,Z)\ \ =\ \
  \,&g(R_{X,\xi}Y,Z)-g(\n_Y\n_Z\xi,X)+\n_Y(2\f 
g(X,Z))\\ &+g(\n_Z\n_Y\xi,X)-\n_Z(2\f g(Y,X))\\
=\ \ \, &2g(R_{X,\xi}Y,Z)+2d\f (Y)g(X,Z)-2d\f (Z)g(X,Y).
\end{align*}
\end{proof}

We compute then (for clarity, we omit the argument $t$)
\beq\label{xisec}\xi''=R_{\dot{c},\xi}\dot{c}+
(d\f\wedge\dot{c})(\dot{c})+d\f(\dot{c})\dot{c},\eeq 
in particular, since $\xi_x=0$, we have
\beq\label{xis0}\xi''(0)=2d\f_x(\dot{c}(0))\dot{c}(0)-d\f_x.\eeq

We come now to the core of our argument. For an arbitrary geodesic $c$
generated by a unit vector in $T_xM$, we estimate the function
$f(t):=g(\xi_{c(t)},\dot{c}(t))$ using the Taylor expansion of order 2:
\beq\label{ef}f(t)=f(0)+tf'(0)+\frac{t^2}{2}f''(0)+O(t^3).\eeq
Here the function $f$ depends on the chosen geodesic, and the error
term $O(t^3)$ is locally bounded (around $x$) by $t^3K$, where $K$ is
a positive constant depending only on the derivatives of $\xi$ but not
on the geodesic $c$. Note
that $\xi_x=0$ implies $f(0)=0$, and 
that $f'(0)=\f(x)=0$ for any
geodesic $c$.

We choose $c:=c_k$ and $t:=t_k$, and denote by
$f_k(t):=g(\xi_{c_k(t)},\dot{c}(t))$. From \eqref{ef} we infer 
\beq f_k''(0)\lra 0 \mbox{ for } k\lra\infty,\eeq

Restricting $c_k$ to a subsequence for which $\dot{c}_k(0)$
converges to a unit vector $V\in T_xM$ yields
\beq\label{dfv}d\f_x(V)=0.\eeq
\smallskip

In the next step
we estimate $\xi(t):=\xi_{c(t)}$
using a version of the Taylor expansion for vector-valued functions
(here we consider $\xi(.)$ as a function on an interval with values in
$\R^n$, whose components are the components of $\xi$ with respect to
a chosen orthonormal basis parallel along $c$):
\beq\label{tay}\xi(t)=\xi(0)+t\xi'(0)+\frac{t^2}{2}\xi''(0)+O(t^3).\eeq

Note that the function $t\mapsto\xi(t)$ depends on the chosen geodesic $c$,
but the norm of the error term $O(t^3)$ is bounded by $M t^3$ for a
constant $M$ 
depending only on the derivatives of $\xi$. We now set $c:=c_k$ and
$t=t_k$ and denote the corresponding 
function $\xi_{c_k(.)}$ by $\xi_k$. We get
$$0=t_k\xi'_k(0)+\frac{t^2_k}{2}\xi_k''(0)+O(t_k^3),$$
which implies, after taking the quotient by $t_k^2$ and using
(\ref{xip}) and (\ref{xis0}): 
\beq\label{est}\left\|\frac 12 d\xi\left(\frac{\dot{c}_k(0)}{t_k}\right)+
2d\f_x(\dot{c}_k(0))\dot{c}_k(0)-d\f_x \right\|\le M t_k\ \forall
k\in\N.\eeq

Denote now by $V_k$ the vector $\dot{c}_k(0)/2t_k$. The sequence $\{V_k\}$ is
unbounded, but in our relation (\ref{est}) only counts the
projection of $V_k$ on the image of $d\xi_x$, denoted by $W_k:=\pi
V_k$. 

Since $\dot{c}_k(0)\lra V\in T_xM$ and $d\f_x(V)=0$ by \eqref{dfv}, we
conclude that the middle term in (\ref{est}) tends to zero, thus
\beq\label{last}\|d\xi_x(W_k)-d\f_x\|\lra 0,\mbox{ for }
k\lra\infty.\eeq
 Since $W_k\perp\ker(d\xi_x)$, it follows that the norm of $W_k$ is
 bounded, and we can restrict to a subsequence that converges to
 $W\in\mbox{Im}(d\xi_x)$ (Here we consider the kernel and the image
 of $d\xi_x$ as the kernel and the image of a skew-symmetric
 endomorphism of $T_xM$). It thus follows 
$$d\f_x=d\xi_x(W),$$
which finishes the proof.

\end{proof}

{\noindent\bf{Remark. }} We have shown that all zeros of $\xi$ which
are not isolated are not essential. Nothing can be said, in general,
about isolated zeros of a conformal vector field, as the following
classical example shows:

Let $\xi$ be the conformal vector field on the round sphere
$S^n\subset \R^{n+1}$, which is
sent through the stereographic projection from one pole $P$ to a
translation vector field on $\R^n$. Then both $\xi$ and its derivative
vanish at $P$, which is the only zero of $\xi$. Therefore, in the
notations of the equation (\ref{1})), $d\xi_P=0$ and $\f_P=0$. On the
other hand, $d\f_P\ne 0$ (which is implied by the very fact that
$\xi$ is not trivial), so the condition in Theorem \ref{cap} is
violated, which is a proof that $P$ is an essential point for $\xi$.

\section{The zero set of a conformal vector field}

As an application to our main result, we prove 
\begin{ath}\label{umb} Let $\xi$ be a conformal vector field on a
  Riemannian manifold $(M,g)$ of dimension at least $2$,
and let $Z_{\xi}$ be the zero set of $\xi$ on $M$. Then $Z_{\xi}$ is a
disjoint union of embedded connected totally umbilical submanifolds of $M$, of even codimension when not reduced to a point.
\end{ath}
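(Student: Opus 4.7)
My plan is to reduce the statement about conformal vector fields to the classical theorem of Kobayashi \cite{kob} concerning Killing fields, using the non-essentiality of non-isolated zeros established in Theorem \ref{ess}. Let $x\in Z_\xi$. If $x$ is isolated in $Z_\xi$, its connected component is a singleton, an embedded zero-dimensional submanifold, and there is nothing else to check. Otherwise $x$ is a limit point of $Z_\xi$, so Theorem \ref{ess} asserts that $x$ is non-essential, and the combination of Theorems \ref{cap} and \ref{fi} produces a neighborhood $U$ of $x$ and a conformally related metric $\t g=e^{2\sigma}g\in c$ on $U$ with respect to which $\xi$ is Killing. The other zeros of $\xi$ lying in $U$ are therefore also zeros of a Killing field on $(U,\t g)$.

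Applied to $(U,\t g)$, Kobayashi's theorem asserts that the connected component $N$ of $Z_\xi\cap U$ containing $x$ is an embedded closed totally geodesic submanifold of $(U,\t g)$, of codimension equal to the rank of the skew-symmetric endomorphism $d\xi_x$, which is automatically even. To transfer total geodesy in $\t g$ to umbilicity in $g$, I would use the standard conformal-change formula for the Levi-Civita connection,
$$\t\n_X Y=\n_X Y+X(\sigma)Y+Y(\sigma)X-g(X,Y)\grad\sigma,$$
and project onto the normal bundle of $N$, noting that orthogonality is conformally invariant. For $X,Y$ tangent to $N$ this yields $(\t\n_X Y)^\perp=(\n_X Y)^\perp-g(X,Y)(\grad\sigma)^\perp$, so the vanishing of the left side forces $(\n_X Y)^\perp=g(X,Y)(\grad\sigma)^\perp$, which is exactly the umbilical condition for $N\subset (M,g)$, with mean curvature vector $(\grad\sigma)^\perp$.

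Finally, I would globalize. The conclusion that $N$ is an embedded totally umbilical submanifold of even codimension is expressed intrinsically in terms of the background metric $g$, so the local pieces obtained above glue coherently on overlaps, and every connected component of $Z_\xi$ that is not a point is a globally embedded totally umbilical submanifold of $(M,g)$ of even codimension. The case $n=2$, which lies outside the scope of Theorems \ref{cap} and \ref{ess}, is handled separately: a conformal vector field on a surface is locally holomorphic, so its zeros are either isolated points or form an open set, and in both cases the conclusion is immediate. The most delicate point is checking that the dimension of $N$ at a non-isolated zero $y$ is locally constant on the connected component of $Z_\xi$ through $y$; this follows because this dimension equals $\dim M-\operatorname{rank}(d\xi_y)$, which is constant on the component by the local Killing description.
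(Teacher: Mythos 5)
Your proof is correct and follows essentially the same route as the paper: non-isolated zeros are inessential by Theorem \ref{ess} (via Theorems \ref{cap} and \ref{fi}), Kobayashi's theorem gives a local totally geodesic description of even codimension for the Killing metric, and the conformal change of Levi-Civita connection transfers total geodesy to umbilicity with mean curvature $(\grad\sigma)^\perp$. Your explicit treatment of the surface case and of the constancy of the component's dimension only makes more precise what the paper dispatches with ``usual connectedness arguments.''
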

Let $N\subset M$ be an isometrically embedded  submanifold and denote with the same letter $g$ both the metric of $M$ and the induced metric on $N$. Let then $\n^M$, $\n^N$ be the associated Levi-Civita connections on $M$ and $N$. The $(0,2)$ tensor field  $B:T^*N\otimes T^*N\ra \nu(N)$, defined by 
$B(X,Y):=\n^M_XY-\n^N_XY$, where $\nu(N)$ the normal bundle of $N$ in $M$, is called the {\em second fundamental form} of the embedding. 

By definition, the submanifold $N$ is called \emph{totally umbilical} if its 
second fundamental form  is proportional to the metric tensor induced on $N$ in the following sense:
$$B(X,Y)=Hg(X,Y),\ \forall X,Y\in \mathcal{X}(N).$$
The normal vector field $H$ is called the {\em mean curvature} vector field. One-dimensional submanifolds and totally geodesic submanifolds ({\em i.e.} with vanishing second fundamental form) are totally umbilical. 

By definition, every
point is considered to be a totally umbilical submanifold of dimension 0.

Using the relation between the Levi-Civita connections $\n$, $\n'$ of two conformal metrics $g'=e^{2f}g$:
$$\n'=\n+df\otimes Id +Id\otimes df -g\cdot \grad_gf,$$
one easily sees that if $N$ is totally umbilical with respect to a metric
then 
it is totally umbilical with respect to any other conformally equivalent metric. In
particular, $N$ is totally umbilical if there exists a metric $\tilde g$ in the
conformal class, for which $N$ is totally geodesic. 

\begin{proof}[Proof of Theorem \ref{umb}]
First, we apply Theorem \ref{ess} to decompose the zero set $Z_{\xi}$ as the
disjoint union of its isolated points $Z^{iso}$ and non-isolated
points $Z':=Z_{\xi}'$, which are inessential, and 
conclude that every $x\in Z'$ admits a metric $g_x\in [g]$, defined
locally around $x$, such that $\xi$ is Killing with respect to $g_x$, and
$\xi_x=0$. 

The classical result of Kobayashi \cite{kob} implies then that 
there is a neighborhood $U_x$ of $x$ such that $Z_{\xi}\cap
U_x=\exp_x(\ker d\xi_x)\cap U_x=\exp_y(\ker d\xi_y)\cap U_x$, for any
$y\in Z_{\xi}\cap U_x$ (here, we consider the dual 1-form
to $\xi$ with respect to $g_x$, and compute its exterior
derivative $d\xi$ at $x$; Also, the exponential map $\exp_x$ is taken
with respect to $g_x$). 

In conclusion, every point in $Z_{\xi}$ admits a
neighborhood $U_x$ such that $Z_{\xi}\cap U_x$ is a totally umbilical
submanifold of even codimension. By usual connectedness arguments,
these local totally umbilical submanifolds have a common dimension and
glue together to a global submanifold.

\end{proof}

Note that  a  classification of totally umbilical submanifolds exists
only for space forms and, more generally, for locally symmetric spaces
(see {\em e.g.} \cite{che}).  

The above result can then be a useful tool for producing examples of totally umbilical submanifolds or obstructions to the existence of conformal vector fields on certain Riemannian manifolds.

\end{document}